\newtheorem{theorem}{Theorem}
\newtheorem{lemma}[theorem]{Lemma}
\newtheorem{corollary}[theorem]{Corollary}
\newtheorem{definition}{Definition}
\begin{document}

\title{The DP-coloring of the square of subcubic graphs}

\author{Ren Zhao}
\date{\today}
\address{Department of Fundamental Education\\ Suzhou Vocational and Technical College\\ P.R. China } 
\email{ren-zhao@outlook.com}

\maketitle

\begin{abstract}
	The 2-distance coloring of a graph $G$ is equivalent to the proper coloring of its square graph $G^2$, it is a special distance labeling problem. DP-coloring (or “Correspondence coloring”) was introduced by Dvořák and Postle in 2018, to answer a conjecture of list coloring proposed by Borodin. In recent years, many researches pay attention to the DP-coloring of planar graphs with some restriction in cycles. We study the DP-coloring of the square of subcubic graphs in terms of maximum average degree $\rm{mad}(G)$, and by the discharging method, we showed that: for a subcubic graph $G$, if $\rm{mad}(G)<9/4$, then $G^2$ is DP-5-colorable; if $\rm{mad}(G)<12/5$, then $G^2$ is DP-6-colorable. And the bound in the first result is sharp.

\medskip
  
\noindent {\bf Keywords.}   DP-coloring, subcubic graphs, maximum average degree, square graphs

\noindent \textbf{Mathematics Subject Classification.} 05C15

\end{abstract}

\section{Introduction}\label{intro}

For simple graph $G=(V,E)$, we denote $d_G(v),\Delta(G),\delta(G)$ to be the degree, maximum degree,minimum degree of $G$,respectively. A cubic graph is a graph with all vertices have degree $3$, a subcubic graph is a graph with $\Delta(G)\le3$. The square graph $G^2$ of $G$ is the graph obtained by joining any pair of vertices of distance 2 with a new edge. Let $\rm{mad}(G)=max_{H\subseteq G}{\frac{2|E(H)|}{|V(H)|}}$ be the maximum average degree of $G$. And the girth $g$ of the $G$ is the length of the shortest cycle in $G$. A $k$-vertex is a vertex of degree $k$, a $k$-cycle is a cycle of length $k$, a $k$-face is a face with a $k$-cycle be its boundary cycle. We use $E[X,Y]$ for the set consists of the edges of $G$ with one endpoint in $X\subseteq V$ and another in $Y\subseteq V$. 

The \textbf{proper $k$-coloring} of $G$ is a mapping $c:V\rightarrow \{1,2,\cdots,k\}$, satisfing $c(v_0)\neq c(v_1)$ for any $v_0v_1\in E$. The \textbf{chromatic number},denoted by $\chi(G)$, is the smallest $k$ such that $G$ has a proper $k$-coloring. Moreover,if $c(v_0)\neq c(v_2)$ for any  $v_2\in V$ has a common neighbor with $v_0$, we say $c$ is a \textbf{2-distance $k$-coloring} of $G$. We define $\chi_2(G)$ as the smallest $k$ such that $G$ admits a 2-distance $k$-coloring. Apparently, the proper coloring of $G^2$ is equivalent to the 2-distance coloring of $G$, that is to say:$\chi_2(G)=\chi(G^2)$.

For a list assignment $L$ defined on $V$, if there exist a proper coloring $c$, such that $c(v)\in L(v), v\in V$, we call $c$ a $L$-coloring of $G$ or $G$ is $L$-colorable. Graph $G$ is \textbf{$k$-choosable} if $G$ is $L$-colorable for any $L$ with $|L(v)|\ge k,v\in V$. The minimum $k$ such that $G$ is $k$-choosable is the \textbf{list chromatic number}, denoted by $\chi_l(G)$.

As the generalization of proper coloring, $\chi_l(G)$ is no less than $\chi(G)$. In contrast to all the planar graphs are 4-colorable(4-Color Theorem), Thomassen\cite{T94} showed that they are 5-choosable. Naturaly, there are many articles concerning the 4-choosability and 3-choosability of some planar graphs without certain cycles. In 1996, Borodin\cite{B96} proved that: every planar graph without cycles between 4 and 9 is 3-colorable. Then in 2013, Borodin\cite{B13} proposed a problem that: prove that every planar graph without cycles of length from 4 to 8 is 3-choosable[Problem 8.1]. Dvořák and Postle\cite{DP18} gave an affirmative answer by introducing a new concept:\textbf{Correspondence Coloring}. It was renamed DP-coloring by Bernshtepn, Kostochka and Pron\cite{BKP17}, and they repharsed the definition as following:
\begin{definition}
	For graph $G=(V,E)$ and the list assignment $L$, the \textbf{cover} $H_L$ of $G$ is a graph that satisfy the following conditions:
	\begin{enumerate}
		\item $V(H_L)=\{(u,c_i):u\in V,c_i\in L(u)\}$;
		\item The induced subgraph of $\{u\}\times L(u)=\{(u,c_i):c_i\in L(u)\}$ is a clique;
		\item $E[\{u\}\times L(u),\{v\}\times L(v)], uv\in E$ form a matching of $H_L$, denoted by $M_{uv}$(may be empty);
		\item $E[\{u\}\times L(u),\{v\}\times L(v)]=\emptyset, uv\notin E$.
	\end{enumerate} 
	Let $M_L=\{M_{uv}:uv\in E\}$ be the matching assignment of $H_L$. For any list assignment $L$ with $|L(u)|\ge k,u\in V$, We say $G$ is \textbf{DP-$k$-colorable} if there exist an independent set $I$ with $|I|=|V|$ in $H_L$. The \textbf{DP-chromatic number} is the minimum $k$ such that $G$ is DP-$k$-colorable, denoted by $\chi_{DP}(G)$.
\end{definition}
For graph $G$ and list assignment $L$, if $G$ is DP-$k$-colorable for matching assignment $M_L=\{(u,c_i)(v,c_i):uv\in E,c_i\in L(u)\cap L(v)\}$, then the independent set $I=\{(u,c(u)):u\in V,c(u)\in L(u)\}$ is corresponding to a $L$-coloring $c$ of $G$. That is to say, DP-coloring is a generalization of list coloring, and $\chi_{DP}(G)\ge \chi_l(G)$. For example, $\chi_{DP}(C_4)=3, \chi_l(C_4)=2$.
	
In this article, we mainly concentrate on the DP-coloring to the square of the subcubic graphs. In terms of proper coloring: Wegner\cite{W77} proved that the square of the cubic graph is 8-colorable in 1977; Thomassen\cite{T18} showed that the square of a planar cubic graph is 7-colorable. In terms of list coloring, Dvořak: For subcubic graph $G$, Skrekovski and Tancer\cite{DST08} showed that:$\chi_l(G^2)\le 4$ if $\rm{mad}(G)<\frac{24}{11}$ and without $5$-cycles, $\chi_l(G^2)\le 5$ if $\rm{mad}(G)<\frac{7}{3}$, and $\chi_l(G^2)\le 6$ if $\rm{mad}(G)<\frac{5}{2}$; Havet\cite{H09} improved the bound of the third result to $\frac{18}{7}$; Cranston and Kim\cite{CK08} showed that $\chi_l(G^2)\le 8$ if $G$ is not Petersen graph(and the bound is sharp). We showed that:
\begin{theorem}\label{thm1}
	For subcubic graph $G$,$\chi_{DP}(G^2)\le5$ if $\rm{mad}(G)<\frac{9}{4}$.
\end{theorem}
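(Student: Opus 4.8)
\noindent\emph{Proof strategy.} The plan is a minimal-counterexample argument combining reducibility with discharging. Suppose the theorem fails and let $G$ be a subcubic graph with $\mathrm{mad}(G)<\frac94$ of smallest order for which $G^2$ carries a cover $H_L$ with every list of size at least $5$ and with no independent set of size $|V(G)|$; we may assume $G$ is connected with $|V(G)|\ge2$. I would prove that $G$ contains neither of the configurations \emph{(C1)} a vertex of degree $1$, nor \emph{(C2)} a $2$-vertex both of whose neighbors have degree $2$.

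For each of (C1) and (C2) the reduction is by deletion. In (C1) let $v$ be the $1$-vertex and put $S=\{v\}$; in (C2) let $v$ be the $2$-vertex, $a,b$ its neighbors, and put $S=\{v,a,b\}$; in both cases set $G'=G-S$. Then $G'$ is subcubic, $\mathrm{mad}(G')\le\mathrm{mad}(G)<\frac94$, and $|V(G')|<|V(G)|$, so by minimality $(G')^2$ has a DP-$5$-coloring with respect to the restriction of $H_L$. The point where DP-coloring differs from ordinary coloring is that one must check $(G')^2=G^2[V(G)\setminus S]$: deleting $S$ creates no new adjacency of $G^2$ among the surviving vertices, since a witnessing path $x\!-\!w\!-\!y$ in $G$ with $w\in S$ would need $x,y$ to be two distinct vertices of $N_G(w)\setminus S$, while here $|N_G(w)\setminus S|\le1$ for every $w\in S$. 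Granting this, the DP-coloring of $(G')^2$ is a partial DP-coloring of $G^2$ on $V(G)\setminus S$, and since each already-colored $G^2$-neighbor of a vertex forbids at most one color in its fiber, it is enough to extend greedily along an ordering of $S$ in which each vertex has at most $4$ previously-colored $G^2$-neighbors. In (C1) this holds since $d_{G^2}(v)\le d_G(u)\le3$, where $u$ is the neighbor of $v$. In (C2) color $a$, then $b$, then $v$: since $v$ and $b$ are deleted $G^2$-neighbors of $a$, vertex $a$ has at most $d_{G^2}(a)-2\le3$ colored $G^2$-neighbors when colored; then $b$ has at most $(d_{G^2}(b)-2)+1\le4$ (the extra one being $a$); and $v$ has at most $d_{G^2}(v)\le4$. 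Here $d_{G^2}(a),d_{G^2}(b)\le5$ because each of $a,b$ is a $2$-vertex with a $2$-neighbor (namely $v$), while $d_{G^2}(v)\le4$ because both neighbors of $v$ have degree $2$. Each extension succeeds, contradicting the non-colorability of $H_L$. The only delicate part is the enumeration of degenerate shapes of (C2) --- $a\sim b$, $a$ and $b$ sharing a second common neighbor, $a',b'$ on a short cycle, or $G$ itself a short cycle --- in each of which the relevant $G^2$-degrees only drop, so the same extension still works.

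With (C1) and (C2) forbidden, $\delta(G)\ge2$, and $G$ is not $2$-regular (a cycle contains (C2)); hence every vertex of $G$ has degree $2$ or $3$, at least one vertex has degree $3$, and every $2$-vertex has a $3$-neighbor. Now run the discharging. Assign to each $v\in V(G)$ the charge $\mu(v)=d_G(v)-\frac94$, so that $\sum_{v}\mu(v)=2|E(G)|-\frac94|V(G)|<0$ on applying $\mathrm{mad}(G)<\frac94$ to $G$ itself. Use the single rule: every $3$-vertex sends $\frac14$ to each of its $2$-neighbors, and let $\mu^{*}$ be the resulting charge. Then every $3$-vertex satisfies $\mu^{*}(v)\ge\frac34-3\cdot\frac14=0$, and every $2$-vertex satisfies $\mu^{*}(v)\ge-\frac14+\frac14=0$ because it has a $3$-neighbor. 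Hence $0\le\sum_v\mu^{*}(v)=\sum_v\mu(v)<0$, a contradiction, which proves the theorem.

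The discharging is routine, so I expect the whole difficulty to sit in the reducibility of (C2): not the arithmetic of $G^2$-degrees, which is easy, but the bookkeeping that legitimizes the deletion for DP-coloring --- verifying that the cover of $G^2$ restricts correctly to $(G')^2$ so that a DP-coloring of the smaller square genuinely extends --- together with the case analysis for the degenerate positions of the configuration. It is worth noting that $\frac94$ is exactly the value for which the single rule balances: a $3$-vertex incident to three $2$-vertices is drained to $0$, while a $2$-vertex adjacent to one $3$-vertex collects exactly its deficit of $\frac14$; this is also why one expects the bound $\frac94$ to be sharp.
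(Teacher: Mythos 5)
Your proposal is correct and follows essentially the same route as the paper: a minimum counterexample, reducibility of $1$-vertices and of a $2$-vertex with two $2$-neighbors (which is the paper's $3$-thread reduction together with its degenerate cases), and the identical discharging rule sending $\frac14$ from each $3$-vertex to its $2$-neighbors, with the charge merely normalized to $d_G(v)-\frac94$. Your bookkeeping for the deletion step --- in particular the verification that $(G-S)^2=G^2[V\setminus S]$ because $|N_G(w)\setminus S|\le 1$ for every $w\in S$, so the DP-coloring of the smaller square genuinely extends greedily --- is sound, and in fact you get by with fewer reducible configurations than the paper lists.
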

\begin{theorem}\label{thm2}
	For subcubic graph $G$,$\chi_{DP}(G^2)\le6$ if $\rm{mad}(G)<\frac{12}{5}$.
\end{theorem}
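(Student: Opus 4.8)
The plan is a standard minimal-counterexample-plus-discharging argument. Suppose Theorem~\ref{thm2} fails and let $G$ be a counterexample minimizing $|V(G)|$: thus $G$ is subcubic, $\mathrm{mad}(G)<12/5$, and there is a list assignment $L$ on $V(G^2)$ with $|L(v)|\ge 6$ together with a matching assignment $M_L$ on the cover of $G^2$ that admits no independent set of size $|V(G^2)|$. Since taking squares and DP-colourings both commute with disjoint unions, I may assume $G$ is connected. The two structural facts I would extract from minimality are: (A) $\delta(G)\ge 2$; and (B) no two $2$-vertices of $G$ are adjacent.

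Fact (A) is immediate: if $v$ is a $1$-vertex with neighbour $u$ then $(G-v)^2=G^2-v$, so by minimality the cover of $G^2-v$ has a full independent set, and since $d_{G^2}(v)=d_G(u)\le 3$, fewer than $6$ colours of $L(v)$ are blocked, so we can add $v$. For (B), suppose $uv\in E(G)$ with $d_G(u)=d_G(v)=2$ and write $N_G(u)=\{v,x\}$, $N_G(v)=\{u,y\}$. One first checks that no path of length $\le 2$ between two vertices of $G':=G-\{u,v\}$ uses $u$ or $v$, so that $(G')^2=G^2-\{u,v\}$ and the induced list/matching assignment is just the restriction of $(L,M_L)$; minimality then yields a partial DP-colouring $c$ of $G^2$ defined off $\{u,v\}$. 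Now $d_{G^2}(v)\le 5$, and $u$ is still uncoloured, so at most $4$ colours are blocked at $v$ and we colour $v$; then $d_{G^2}(u)\le 5$ and all $G^2$-neighbours of $u$ are coloured, so at least one colour remains at $u$ — contradiction. (The few degenerate cases, such as $x=y$ or $G$ being very small, only make these counts smaller or reduce $G^2$ to something like $K_3$, $K_4$ or $(P_4)^2$, which are manifestly DP-$6$-colourable.)

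With (A) and (B) every vertex of $G$ has degree $2$ or $3$, and both neighbours of each $2$-vertex are $3$-vertices. I would then give each $v$ the initial charge $\mu(v)=d_G(v)-\tfrac{12}{5}$, so that $\sum_v\mu(v)=2|E(G)|-\tfrac{12}{5}|V(G)|<0$ (taking $H=G$ in the definition of $\mathrm{mad}$), and apply the single rule: each $3$-vertex sends $\tfrac15$ along each edge to a $2$-neighbour. A $2$-vertex ends with $-\tfrac25+2\cdot\tfrac15=0$, and a $3$-vertex with $t\le 3$ neighbours of degree $2$ ends with $\tfrac35-\tfrac t5\ge 0$. Hence the total final charge is nonnegative, contradicting that it equals $\sum_v\mu(v)<0$. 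It is worth noting that $12/5$ is precisely the threshold at which a $3$-vertex with three $2$-neighbours breaks even under this rule, which is why no finer discharging or extra reducible configuration is needed at this bound.

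The only genuinely delicate point is the reducibility in (B), and this is exactly where DP-colouring demands more care than ordinary colouring: one must verify both that deleting the two adjacent $2$-vertices leaves the square unchanged on the surviving vertices — so that restricting the matching assignment is legitimate — and that the two deleted vertices can be reinstated one at a time, colouring last the one all of whose $G^2$-neighbours are already coloured, using only the bound $d_{G^2}(\cdot)\le 5$. Everything else (connectedness reduction, eliminating $1$-vertices, and the discharging computation) is routine.
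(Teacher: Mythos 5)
Your proposal is correct and follows essentially the same route as the paper: a minimal counterexample, reducibility of $1$-vertices and of two adjacent $2$-vertices (the paper's $2$-thread, Lemma~\ref{6-reducible}(1)), and the identical discharging rule sending $\tfrac15$ from each $3$-vertex to each adjacent $2$-vertex, only with charges normalized as $d(v)-\tfrac{12}{5}$ instead of $d(v)$. You even make explicit the point the paper glosses over, namely that $(G-R)^2=G^2-V(R)$ so the matching assignment restricts properly, and you correctly observe that the paper's extra configurations (Lemma~\ref{6-reducible}(2)(3), Lemma~\ref{6-minimal}) are not needed for this discharging.
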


 Let us now consider the planar subcubic graphs $G$ under girth $g$ constraints. Montassier and Raspaud\cite{MR06} showed that $G^2$ is $5$-colorable for $g\ge 14$, $6$-colorable for $g\ge 10$ and $7$-colorable for $g\ge 8$;  Dvořak, Skrekovski and Tancer\cite{DST08} obtained some results about list coloring: $G^2$ is $4$-choosable for $g\ge24$, $5$-choosable for $g\ge 14$, $6$-choosable for $g\ge 10$; Cranston and Kim\cite{CK08} improved the girth bound by showing that:$G^2$ is $6$-choosable for $g\ge9$(this result was also obtained by Havet\cite{H09}), $7$-choosable for $g\ge7$. And later the girth bound of $5$-choosable was improved to $13$ by Havet\cite{H09}, and $12$ by Borodin\& Invanova\cite{BI12a}. Borodin\& Invanova\cite{BI11} also proved that $G^2$ is $4$-colorable if $g\ge23$, and they improved this result to $22$ in \cite{BI12b}, now $21$ is the best result which was given by Hoang La and Montassier\cite{LM21}.

By Euler’s Formula, there is a folklore result of planar graph $G$ with girth at least $g$ says: $\rm{mad}(G)<\frac{2g}{g-2}$. Together with above theorems, the following results can be obtained easily:

\begin{corollary}
	Let $G$ be a planar subcubic graph of girth $g$, then $\chi_{DP}(G^2)\le5$ if $g>18$.
\end{corollary}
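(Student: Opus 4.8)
The plan is to read off the corollary directly from Theorem~\ref{thm1}, using the folklore Euler-formula estimate $\mathrm{mad}(G)<\frac{2g}{g-2}$ for planar graphs of girth at least $g$ that is recalled just before the statement. First I would make that estimate precise, and if one wants the argument to be self-contained, re-derive it in one line: any subgraph $H$ of $G$ is again planar and, if it contains a cycle, has girth at least $g$, so Euler's formula $|V(H)|-|E(H)|+|F(H)|=2$ together with $2|E(H)|\ge g\,|F(H)|$ gives $|E(H)|\,\frac{g-2}{g}\le|V(H)|-2$, hence $\frac{2|E(H)|}{|V(H)|}<\frac{2g}{g-2}$; forests and disconnected subgraphs only make this ratio smaller, so taking the maximum over all subgraphs yields $\mathrm{mad}(G)<\frac{2g}{g-2}$.

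Next I would use that $g\mapsto\frac{2g}{g-2}$ is strictly decreasing for $g>2$, so the hypothesis $g>18$, i.e.\ $g\ge 19$, forces
\[
\mathrm{mad}(G)\;<\;\frac{2g}{g-2}\;\le\;\frac{2\cdot 19}{19-2}\;=\;\frac{38}{17}\;<\;\frac{9}{4},
\]
where the last inequality is the only genuine computation, amounting to $38\cdot4=152<153=9\cdot17$. (Note that $\frac{2g}{g-2}$ equals $\frac{9}{4}$ exactly at $g=18$, which is why the threshold is placed just above $18$.) Once $\mathrm{mad}(G)<\frac{9}{4}$ is in hand, Theorem~\ref{thm1} applied to the subcubic graph $G$ gives $\chi_{DP}(G^2)\le5$, which is the assertion.

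I do not expect any real obstacle in this argument: all of the difficulty lives in Theorem~\ref{thm1} and its discharging proof, while the corollary is nothing more than the conversion of a maximum-average-degree hypothesis into a girth hypothesis via Euler's formula, followed by the numerical check above. The only points deserving a moment's care are stating the folklore bound with the correct (strict) inequality and observing that $\frac{2\cdot 19}{19-2}$ genuinely drops below $\frac{9}{4}$ rather than merely reaching it.
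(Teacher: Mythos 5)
Your proposal is correct and matches the paper's (implicit) argument exactly: the paper derives this corollary by combining the folklore bound $\mathrm{mad}(G)<\frac{2g}{g-2}$ for planar graphs of girth $g$ with Theorem~\ref{thm1}, just as you do. Your numerical check that $g\ge 19$ gives $\mathrm{mad}(G)<\frac{38}{17}<\frac{9}{4}$ is the whole content of the deduction, so nothing is missing.
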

\begin{corollary}
	Let $G$ be a planar subcubic graph of girth $g$, then $\chi_{DP}(G^2)\le6$ if $g>12$.
\end{corollary}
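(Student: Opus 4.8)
The plan is to obtain this corollary as an immediate consequence of Theorem~\ref{thm2}, by converting the girth hypothesis into the maximum-average-degree hypothesis that Theorem~\ref{thm2} needs. The bridge is exactly the folklore inequality already quoted in the text: a planar graph of girth at least $g$ satisfies $\mathrm{mad}(G)<\frac{2g}{g-2}$. So the whole proof is: girth $g>12$ $\Rightarrow$ $\mathrm{mad}(G)<\frac{12}{5}$ $\Rightarrow$ $\chi_{DP}(G^2)\le 6$ by Theorem~\ref{thm2}.

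For completeness I would recall the one-line derivation of the folklore bound via Euler's formula. Let $H\subseteq G$ attain $\mathrm{mad}(G)=\frac{2|E(H)|}{|V(H)|}$; we may take $H$ nonempty, and since deleting isolated vertices only increases the ratio and restricting to a densest component does not decrease it, we may take $H$ connected with $\delta(H)\ge 1$. If $H$ is a tree then $\frac{2|E(H)|}{|V(H)|}<2<\frac{2g}{g-2}$. Otherwise $H$ is a connected plane graph whose girth is at least that of $G$, hence at least $g$, so every face is bounded by at least $g$ edges and each edge borders at most two faces, giving $2|E(H)|\ge g\,|F(H)|$; combined with $|V(H)|-|E(H)|+|F(H)|=2$ this yields $|E(H)|\bigl(1-\tfrac{2}{g}\bigr)\le |V(H)|-2<|V(H)|$, i.e. $\frac{2|E(H)|}{|V(H)|}<\frac{2g}{g-2}$.

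Now the arithmetic. Girth is an integer and $g>12$, so $g\ge 13$; since $x\mapsto\frac{2x}{x-2}$ is decreasing, $\mathrm{mad}(G)<\frac{2g}{g-2}\le\frac{26}{11}$. As $26\cdot 5=130<132=11\cdot 12$, we have $\frac{26}{11}<\frac{12}{5}$, hence $\mathrm{mad}(G)<\frac{12}{5}$, and Theorem~\ref{thm2} gives $\chi_{DP}(G^2)\le 6$. The companion corollary for Theorem~\ref{thm1} is structurally identical: $g>18$ forces $g\ge 19$, and then $\mathrm{mad}(G)<\frac{38}{17}<\frac{9}{4}$ because $38\cdot 4=152<153=9\cdot 17$.

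There is no real obstacle here — all the substance of the paper sits in Theorems~\ref{thm1} and~\ref{thm2} and their discharging arguments; the corollaries are a formality. The only points that merit a moment's care are the reduction of the maximum in the definition of $\mathrm{mad}(G)$ to a connected subgraph with no isolated vertices (so Euler's formula applies cleanly) and the observation that girth cannot drop when passing to a subgraph. It is also worth remarking why the threshold is stated as $g>12$: the folklore bound $\frac{2g}{g-2}$ meets $\frac{12}{5}$ exactly at $g=12$, so the strict inequality $\frac{2g}{g-2}<\frac{12}{5}$ — and hence this clean packaging of the hypothesis — needs $g\ge 13$.
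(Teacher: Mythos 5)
Your proposal is correct and follows exactly the paper's intended route: convert $g>12$ into $\mathrm{mad}(G)<\tfrac{12}{5}$ via the folklore Euler-formula bound $\mathrm{mad}(G)<\tfrac{2g}{g-2}$ and apply Theorem~\ref{thm2}; your write-up merely adds the short derivation of the folklore inequality that the paper only cites. (One tiny remark: since the folklore bound is already strict, even $g=12$ gives $\mathrm{mad}(G)<\tfrac{24}{10}=\tfrac{12}{5}$, so your closing comment that $g\ge 13$ is \emph{needed} is not quite right, though it does not affect the validity of the proof as stated.)
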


\section{Terminology}
A graph $G$ is \textbf{$k$-minimal} if $G^2$ is not DP-$k$-colorable but each of its proper subgraph does. So $k$-minimal graph is obviously connected. A configuration is \textbf{$k$-reducible} if it can not appear in the $k$-minimal graph $G$. A \textbf{$l$-thread} is a path induced by $l$ vertices of degree $2$ in $G$, and a $0$-thread is a $3$-vertex. A $l$-thread is called \textbf{longest} if its two endpoingts are both $3$-vertices. Let $Y_{a,b,c}$ be a $3$-vertex incident with three threads of length $a,b,c$. Let $G_2(v)$ be the subgraph induced by a vertex and the longest threads it incident with.

\section{Proofs of Results}
We proof the main results by contradiction. If Theorem~\ref{thm1} is not true, there must exist some subcubic graphs with maximum average degree less than $\frac{9}{4}$ while their square graphs are not DP-$5$-colorable. Let $G$ be the counterexample with the fewest vertices, named $5$-minimal graph. Then the square of any proper subcubic graph of $G$ is DP-$5$-colorable. The same goes for Theorem~\ref{thm2}. We will show that the $k$-minmimal graph for $k=5,6$ with the assumption about $\rm{mad}(G)$ is acturally not exist.

\subsection{Reducible Configurations}\label{reducible}
To show that configuration $R$ is $k$-reducible in $k$-minimal graph $G$, it suffices to show any DP-$k$-coloring of the square of $G-R$ can be extended to $G^2$. By definition, which means their always exist an independent set $I$ with $|I|=|V|$, for each list assignment $L$ with $|L(u)|=k,u\in V$ and matching assignment $M_L$. That is contradict the minimality of $G$. 
Now we present some $k$-reducible configurations for $k=5,6$.

\begin{lemma}
	$d_G(v)\ge2,v\in V$.
\end{lemma}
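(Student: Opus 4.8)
The plan is to argue by contradiction from the minimality of $G$, exactly as in the scheme described just above the lemma statement. Suppose some vertex $v$ has $d_G(v)\le 1$. I would first clear away the degenerate case: a $k$-minimal graph is connected, so if $\abs{V(G)}\le 1$ then $G^2$ is a single vertex, which is DP-$1$-colorable, contradicting that $G^2$ is not DP-$k$-colorable; and a connected graph on at least two vertices has no isolated vertex, so in fact $d_G(v)=1$. Let $u$ be the unique neighbour of $v$, so $R=\{v\}$ is the configuration to be shown reducible.

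Next I would record the two structural facts that make $v$ reducible. First, in $G^2$ the vertex $v$ is adjacent precisely to $u$ and to the remaining neighbours of $u$, hence $d_{G^2}(v)=d_G(u)\le 3<k$. Second, since $v$ is a leaf, no shortest path in $G$ between two vertices distinct from $v$ can pass through $v$; therefore $(G-v)^2=G^2-v$, not merely a subgraph. Because $G-v$ is a proper subgraph of the $k$-minimal graph $G$, its square $(G-v)^2=G^2-v$ is DP-$k$-colorable.

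Then I would run the standard extension step. Fix an arbitrary list assignment $L$ with $\abs{L(w)}=k$ for every $w\in V(G^2)$ and a matching assignment $M_L$, with cover $H_L$. Deleting the clique $\{v\}\times L(v)$ from $H_L$ leaves a cover of $G^2-v=(G-v)^2$, which therefore admits an independent set $I'$ with $\abs{I'}=\abs{V(G)}-1$ meeting every fibre $\{w\}\times L(w)$, $w\ne v$. At most $d_{G^2}(v)\le 3$ vertices of $\{v\}\times L(v)$ lie on a matching edge to a vertex of $I'$, and $\abs{L(v)}=k\ge 5$, so some $(v,c)$ is non-adjacent in $H_L$ to all of $I'$; then $I'\cup\{(v,c)\}$ is an independent set of size $\abs{V(G)}$ in $H_L$. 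As $L$ and $M_L$ were arbitrary, $G^2$ is DP-$k$-colorable, a contradiction. Nothing in this argument uses anything beyond $k\ge 5$, so it applies verbatim to both Theorem~\ref{thm1} and Theorem~\ref{thm2}. There is essentially no obstacle here: the only point meriting a word of care is the equality $(G-v)^2=G^2-v$, which can fail for a general vertex but holds for a leaf; the rest is bookkeeping on degrees and list sizes.
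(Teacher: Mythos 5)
Your proposal is correct and follows essentially the same argument as the paper: delete the leaf $v$, use minimality to get an independent set $I'$ for $(G-v)^2=G^2-v$, and observe that since $d_{G^2}(v)\le 3<k$ some vertex of the fibre $\{v\}\times L(v)$ extends $I'$. Your extra care about the isolated-vertex case and the equality $(G-v)^2=G^2-v$ only tightens points the paper leaves implicit.
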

\begin{proof}
	For $k$-minimal graph $G$, its square graph $G^2$ is not DP-$k$-colorable. So there is no independent set $I$ with $|I|=|V|$ under a matching assignment $M_L$. Assume that $v$ is a $1$-vertex in $G$, then $(G-v)^2=G^2-v$ is DP-$k$-colorable by minimality of $G$. Let $I'$ be the independent set with $|I'|=|V|-1$. 
	The degree of $v$ is at most $3$ in $G^2$, so there are at least two colors $c_1,c_2\in L(v)$, satifying $(v,c_i)(u,c)\notin E(M_{uv}),(u,c)\in I'$, in which $u$ is any neighbor of $v$ in $G^2$. Hence, $I'\cup \{(v,c_i)\}$ is an independent set of $G^2$ with cardinality $|V|$. Which means $G^2$ is DP-$k$-colorable, that is a contradiction.
\end{proof}
Stated differently, there are only $2$-vertices and $3$-vertices in $G$.
\begin{lemma}\label{face}
	If $F$ is a $m$-face $v_1v_2\cdots v_mv_1(m\ge3)$, and only $v_1$ is a $3$-vertex, then $F$ is $k$-reducible.
\end{lemma}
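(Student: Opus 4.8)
The plan is to show that a face $F = v_1 v_2 \cdots v_m v_1$ (with $m \ge 3$) whose only $3$-vertex is $v_1$ cannot occur in a $k$-minimal graph $G$, for $k = 5$ and $k = 6$. Since all other $v_i$ are $2$-vertices, the cycle $v_2 v_3 \cdots v_m$ forms a thread hanging off $v_1$ (more precisely, removing the edges/vertices of the face leaves at most one extra edge at $v_1$). First I would delete the vertices $v_2, \dots, v_m$ from $G$ to obtain a proper subgraph $G' = G - \{v_2, \dots, v_m\}$; note $G'$ is still subcubic, and $\mathrm{mad}(G') \le \mathrm{mad}(G) < k/2 \cdot (\text{the relevant bound})$, so by minimality $(G')^2$ has a DP-$k$-coloring, i.e.\ an independent transversal $I'$ in the cover $H_{L'}$ for any list assignment with lists of size $k$ and any matching assignment $M_{L'}$. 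The goal is to extend $I'$ to an independent set $I$ of $G^2$ of size $|V(G)|$ by choosing, one at a time, a vertex of the cover over each of $v_2, \dots, v_m$ that avoids the matchings to already-chosen vertices.

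The key observation to carry out is a degree count in $G^2$ for each $v_i$ with $2 \le i \le m$. In $G$, such a vertex has degree $2$, so in $G^2$ its neighbours are its two $G$-neighbours together with the vertices at $G$-distance exactly $2$; since all of $v_2, \dots, v_m$ have degree $2$, the total $G^2$-degree of each $v_i$ is small — at most $4$ in general, and smaller near the "ends" $v_2$ and $v_m$ (where one neighbour is the degree-$3$ vertex $v_1$, contributing its other $G$-neighbours). I would order the extension greedily starting from the end of the thread farthest from $v_1$, so that when it is time to colour $v_i$ the only already-coloured $G^2$-neighbours are $v_{i+1}$ (and possibly $v_{i+2}$, plus for the last vertices the vertices near $v_1$); at each step the number of forbidden colours is the number of such neighbours, which by the count is at most $k - 1$, leaving at least one colour $c \in L(v_i)$ with $(v_i, c)$ non-adjacent in $H_L$ to every element of the partial independent set. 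Picking such a $c$ at each step extends $I'$ to the desired $I$, contradicting that $G^2$ is not DP-$k$-colourable. For the case $m = 3$ one must be slightly careful because $v_2$ and $v_3$ are each adjacent to $v_1$ and to each other, and both see the $\le 2$ other neighbours of $v_1$, giving $G^2$-degree at most $4$; ordering $v_3$ then $v_2$ still works since $v_3$ sees at most $3$ previously-coloured neighbours and $v_2$ sees at most $4 \le k - 1$.

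The main obstacle, and the place requiring genuine care rather than routine bookkeeping, is making the degree-in-$G^2$ estimates tight enough that the number of already-coloured neighbours is strictly less than $k$ at every step of the greedy order — in particular handling the two or three vertices adjacent to $v_1$, since $v_1$ has degree $3$ and so contributes two extra distance-$2$ neighbours, and handling the genuinely small cases $m = 3, 4, 5$ where the thread "wraps around" and vertices may see each other through $v_1$ as well as directly. I would organise this by writing $G_2(v_1)$ (the subgraph induced by $v_1$ and its longest incident threads, in the notation of the Terminology section) and checking that along the face-thread every vertex has at most $k-1$ neighbours among those that precede it in the chosen elimination order; once that inequality is verified in each position, the extension argument is immediate and uniform for both $k = 5$ and $k = 6$. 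A secondary point to state explicitly is that deleting vertices only decreases $\mathrm{mad}$, so the hypothesis $\mathrm{mad}(G') < 9/4$ (resp.\ $< 12/5$) is inherited and minimality of $G$ applies to $G'$.
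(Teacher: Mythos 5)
Your overall strategy (delete the $2$-vertices of the face, invoke minimality, and extend greedily along the thread) is the same as the paper's, and the deletion step itself is fine: since every deleted vertex has both $G$-neighbours on the face, with at most one of them being $v_1$, no $G^2$-adjacency between surviving vertices is lost, so the independent set for the subgraph really does restrict correctly. The genuine gap is in your degree count and in the elimination order built on it. The end vertices of the thread do \emph{not} have smaller $G^2$-degree; they have the largest. For $m\ge 6$ the vertex $v_2$ is adjacent in $G^2$ to $v_1$, $v_3$, $v_4$, $v_m$ (through $v_1$) and to the third neighbour $u_1$ of $v_1$, i.e.\ it has $G^2$-degree $5$, while the interior thread vertices have degree $4$. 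With your order running from $v_m$ down to $v_2$, the last vertex $v_2$ has \emph{all five} of its $G^2$-neighbours already chosen ($v_1,u_1$ from $I'$, and $v_3,v_4,v_m$ coloured earlier), so for $k=5$ the matchings can forbid every colour in $L(v_2)$ and the greedy step can fail. This is not confined to the ``small cases'' $m=3,4,5$ you single out; it occurs for every $m\ge 5$. You flag the tightness of the count as the main obstacle, but the specific order you propose does not resolve it, and your claim ``at most $k-1$ forbidden colours at every step'' is false at the final step.

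The repair is exactly where the paper puts its care: arrange the order so that the \emph{last} vertex coloured is an interior thread vertex, whose list is untouched by $I'$ (size $k$) and which has at most $4$ previously coloured $G^2$-neighbours. The paper deletes the whole face including $v_1$, observes $|L^*(v_1)|\ge 2$ (only $u_1,u_2,u_3$ lie outside), $|L^*(v_2)|,|L^*(v_m)|\ge 4$ and full lists on $v_3,\dots,v_{m-1}$, and colours in the order $v_1,v_2,v_m,v_3,\dots,v_{m-1}$, so the final vertex $v_{m-1}$ sees at most $4$ coloured neighbours against $k\ge 5$ available colours. With your deletion (keeping $v_1$), the order $v_2,v_m,v_3,v_4,\dots,v_{m-1}$ works for the same reason; but as written, your proof does not go through for $k=5$.
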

\begin{proof}
	For $k$-minimal graph $G$, let $M_L$ be a matching assignment such that the cover of $G^2$ does not contain the indepedent set of cardinality $|V|$. If configuration $F$ is contained in $G$, then $(G-F)^2$ is DP-$k$-colorable, so there must exist an independent set $|I'|$ of cardinality $|V|-m$.
	Assume that $3$-vertex $v_1$ is adjacent to $u_1$ in $G-F$, and the neighbors of $u_1$ are $v_1,u_2,u_3$ for $d_G(v)=3$(or $v_1,u_2$ for $d_G(u_1)=2$). Denote:
	$$L^*(v_i)=L(v_i)\backslash \bigcup\limits_{u_j v_i\in E[R^2,G^2-R^2]}\{c'\in L(v_i): (u_j,c)(v_i,c')\in M_{u_j,v_i}, (u_j,c)\in I'\}$$
	$v_1$ has at most $3$ neighbors in $G^2-R^2$, and $|L(v_1)|\ge5$, so $|L^*(v_1)|\ge2$. Similarly, $|L^*(v_2)|\ge4,|L^*(v_m)|\ge4,|L^*(v_i)|\ge4,i=3,4,\cdots,m-1$. We can obtain an independent set $I^*=\{(v_i,c_i),i=1,2,\cdots,m\}$ by coloring $v_1,v_2,v_m,v_3,\cdots,v_{m-1}$ in order. Now $I'\cup I^*$ is an independent set of cardinality $|V|$ in the cover of $G^2$. Therefore $G^2$ is DP-$k$-colorable, a contradiction.
\end{proof}
The next lemmata are concerning specific $5$-reducible and $6$-reducible configurations of $G$.The method of prove configuration $R$ is $k$-reducible is similar to the proof of Lemma~\ref{face}: Let $V(R)=\{v_i:i=1,2,\cdots,n_1\},V(G-R)=\{u_j:j=1,2,\cdots,n_2\}$, in which $n_1+n_2=|V|$. If $H_L$ is the cover of $G^2$ with list assignment $L$ and matching assignment $M_L$. By the minimality,$G^2-R^2$ is DP-$k$-colorable,namely there exist an independent set $I'$ of cardinality $n_2$ in the cover of $G^2-R^2$. For any $v_i\in V(R)$, denote:
$$L^*(v_i)=L(v_i)\backslash \bigcup\limits_{u_j v_i\in E[R^2,G^2-R^2]}\{c'\in L(v_i): (u_j,c)(v_i,c')\in M_{u_j,v_i}, (u_j,c)\in I'\}$$
Finally, we show the existence of independent set $I^*$ of cardinality $n_1$ in $H_L[V(R)]$, by giving an order of color $V(R)$. Now $I'\cup I^*$ is an independent set of cardinality $|V|$ in the cover of $G^2$. Therefore $G^2$ is DP-$k$-colorable.

\begin{lemma}\label{5-reducible}
	The following configurations are $5$-reducible:
	\begin{enumerate}
		\item $3$-thread;
		\item $3$-face with only two $3$-vertices, and each $3$-vertex is incident to a $1$-thread;
		\item $4$-face with only two adjacent  $3$-vertices, and each $3$-vertex is incident to a $1$-thread;
		\item two $3$-faces share one common edge, and only the endpoints of common edge are $3$-vertices;
		\item two $4$-faces share one common edge, and only the endpoints of common edge are $3$-vertices;
		\item $3$-face shares one common edge with $4$-face, and only the endpoints of common edge are $3$-vertices.
	\end{enumerate}
\end{lemma}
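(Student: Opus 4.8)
The plan is to treat all six items by the deletion method set out just before the statement, which generalises the proof of Lemma~\ref{face}. Assume the configuration $R$ occurs in a $5$-minimal graph $G$; delete $V(R)$ and let $I'$ be an independent set of size $|V(G)|-|V(R)|$ in the cover of $G^2$ restricted to $V(G)\setminus V(R)$, which exists by minimality. For $v\in V(R)$ one has $|L^*(v)|\ge 5-|N_{G^2}(v)\setminus V(R)|$, since each $G^2$-neighbour of $v$ outside $R$ forbids at most one colour of $L(v)$: the matchings of $M_L$ are matchings and $I'$ picks one vertex per class. It then suffices to order $V(R)$ so that every vertex, when coloured, has strictly fewer already-coloured $G^2$-neighbours in $R$ than colours left in its $L^*$; greedily colouring along such an order extends $I'$ to an independent set of size $|V(G)|$ in the cover of $G^2$, contradicting $5$-minimality.

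For items (1)--(3), where $R$ is a proper subgraph, I would first check that no path $pwq$ of $G$ with $w\in V(R)$ has both $p,q\notin V(R)$: every vertex of $R$ with a neighbour outside $R$ is a $2$-vertex on a thread, with exactly one neighbour inside and one outside $R$. Hence the restricted cover is the cover of $(G-R)^2$ and minimality genuinely supplies $I'$. The residual lists and the induced square are then read off locally. For the $3$-thread $xv_1v_2v_3y$ with $R=\{v_1,v_2,v_3\}$: $|L^*(v_1)|,|L^*(v_3)|\ge 2$, $|L^*(v_2)|\ge 3$, $G^2[V(R)]=K_3$, and the order $v_1,v_3,v_2$ gives back-degrees $0,1,2$. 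For item (2), a triangle $v_1v_2v_3$ with $v_3$ the $2$-vertex and $1$-threads $v_1t_1z_1$, $v_2t_2z_2$, and $R=\{v_1,v_2,v_3,t_1,t_2\}$: $|L^*(t_1)|,|L^*(t_2)|\ge 2$, $|L^*(v_1)|,|L^*(v_2)|\ge 4$, $|L^*(v_3)|=5$, while $G^2[V(R)]$ is $K_5$ or $K_5$ minus the edge $t_1t_2$, so the order $t_1,t_2,v_1,v_2,v_3$ gives back-degrees $0,\le 1,2,3,4$. Item (3) is the same computation with $R$ the four face-vertices together with the two thread-vertices: the thread-vertices keep $\ge 2$ colours, the two $3$-vertices keep $\ge 4$, the two $2$-vertices of the $4$-face keep all $5$, and colouring the two thread-vertices first, then the two $3$-vertices, then the two remaining face-vertices keeps every back-degree at most $4$.

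For items (4)--(6) the two $3$-vertices on the common edge, and every other vertex of the configuration (necessarily a $2$-vertex), already use their whole degree inside the configuration; since $G$ is connected this forces $R=V(G)$, so $I'=\emptyset$ and the claim reduces to showing that the square of that small graph is DP-$5$-colorable. A short adjacency check shows these squares are $K_4$ for (4), $K_5$ for (6), and $K_6$ minus two disjoint edges for (5); each is $4$-degenerate, hence DP-$5$-colorable by the same greedy colouring (back-degrees at most $4<5$).

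The real constraint is the trade-off governing the choice of $R$: passing to the square shrinks distances, so adding a vertex to $R$ simultaneously enlarges its residual list (fewer outside neighbours) and worsens the greedy order (more inside neighbours), and $R$ must be cut so that both sides work at once. The tightest point is that the thread-vertices $t_1,t_2$ in items (2)--(3) retain only $\ge 2$ colours and so must be coloured first with at most one already-coloured neighbour between them; this holds because $t_1$ and $t_2$ are at distance $3$ in $G$ (and in the degenerate case where the two threads end at a common vertex they are at distance $2$ but then keep $\ge 3$ colours). The remainder is routine checking of adjacencies in $G^2$.
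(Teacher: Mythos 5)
Your proposal is correct and follows essentially the same route as the paper: delete the configuration, use minimality to colour the rest, bound the residual lists $L^*$, and greedily colour $V(R)$ in the same orders (thread/pendant $2$-vertices first, then the $3$-vertices, then the interior $2$-vertices), identifying the squares in items (4)--(6) as small dense graphs that are trivially DP-$5$-colourable. Your additional checks (that no two outside vertices are joined only through $R$, and the degenerate case where the two $1$-threads end at a common vertex) are refinements the paper passes over silently, not a different method.
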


\begin{proof}
	\begin{enumerate}
		\item For $3$-thread $v_1v_2v_3$. If $v_1$ and $v_3$ have a common neighbor, then $|L^*(v_1)|\ge3,|L^*(v_2)|\ge4,|L^*(v_3)|\ge3$; otherwise $|L^*(v_1)|\ge2,|L^*(v_2)|\ge3,|L^*(v_3)|\ge2$. In both cases, we can extend the DP-$5$-coloring of $G^2-R^2$ to $G^2$ by greedily color $v_1,v_3,v_2$ in order.
		\item For $3$-face $v_1v_2v_3v_1$, in which $3$-vertex $v_i$ is adjacent to a $2$-vertex $v'_i$ for $i=1,2$. Then $|L^*(v_3)|=5,|L^*(v_1)|\ge4,|L^*(v_2)|\ge4,|L^*(v'_1)|\ge2,|L^*(v'_2)|\ge2$. We can extend the DP-$5$-coloring of $G^2-R^2$ to $G^2$ by greedily color $v'_1,v'_2,v_1,v_2,v_3$ in order.
		\item For $4$-face $v_1v_2v_3v_4v_1$, in which $3$-vertex $v_i$ is adjacent to a $2$-vertex $v'_i$ for $i=1,2$. Then $|L^*(v_3)|=|L^*(v_4 )|=5,|L^*(v_1)|\ge4,|L^*(v_2)|\ge4, |L^*(v'_1)|\ge2,|L^*(v'_2)|\ge2$. We can extend the DP-$5$-coloring of $G^2-R^2$ to $G^2$ by greedily color $v'_2,v'_1,v_1,v_2,v_3,v_4$ in order.
		\item For configuration contains two adjacent $3$-faces $uvw_1u$ and $uvw_2u$. Its square graph is isomorphism to complete graph $K_4$, which is obiviously DP-$5$-colorable.
		\item For configuration sonsist of two adjacent $4$-faces $v_1v_2v_3v_4v_1$ and $v_1v_2v_5v_6v_1$, in which only $v_1,v_2$ are $3$-vertices. Then $|L^*(v_i)|=5,i=1,2,\cdots,6$. We can extend the DP-$5$-coloring of $G^2-R^2$ to $G^2$ by greedily color $v_1,v_2,\cdots,v_6$ in order.
		\item Let $R$ be the configuration with one $3$-face $v_1v_2v_3v_1$ adjacent to a $4$-face $v_1v_2v_4v_5v_1$, in which only $v_1,v_2$ are $3$-vertices. Then its square graph is isomorphic to complete graph $K_5$, which is obiviously DP-$5$-colorable.
	\end{enumerate}
\end{proof}
\begin{lemma}\label{6-reducible}
	The following configurations are $6$-reducible:
	\begin{enumerate}
		\item $2$-thread;
		\item $4$-face with only two nonadjacent $3$-vertices;
		\item $F_{2,3}$ depicted in Figure~\ref{fig:6-reducible}.
	\end{enumerate}
\end{lemma}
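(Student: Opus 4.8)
The plan is to apply, to each of the three configurations $R$, the reduction template set up just before Lemma~\ref{5-reducible}. Write $V(G)=V(R)\cup V(G-R)$; by minimality $(G-R)^2$ has a DP-$6$-coloring, i.e.\ an independent set $I'$ of size $|V(G-R)|$ in the relevant cover, and since in each of the three configurations every vertex of $R$ has at most one neighbor outside $R$, deleting $R$ creates no new distance-$2$ relation among the surviving vertices, so $(G-R)^2=G^2-R^2$ and $I'$ is still independent in the cover $H_L$ of $G^2$. Then one forms the punctured lists $L^*(v_i)$, $v_i\in V(R)$, and it only remains to find an independent set $I^*$ of size $|V(R)|$ in $H_L[V(R^2)]$ by coloring $V(R)$ in a good order; the greedy step at $v_i$ works whenever $|L^*(v_i)|$ exceeds the number of neighbors of $v_i$ in $R^2$ already colored, because an edge of the cover forbids at most one color. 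So for each $R$ I need only a lower bound $|L^*(v_i)|\ge 6-(\text{number of neighbors of }v_i\text{ in }G^2-R^2)$ coming from the local structure, together with one workable order.

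For the $2$-thread $v_1v_2$: each $v_i$ has one neighbor outside $R$ in $G$, hence at most $4$ outside $R$ in $G^2$ (that external neighbor, its at most two further neighbors, and the external neighbor reached through the other thread-vertex), so $|L^*(v_i)|\ge 2$; as $R^2$ is a single edge, coloring $v_1$ then $v_2$ leaves at least $2$ and then at least $1$ admissible color. For the $4$-face $v_1v_2v_3v_4v_1$ whose only $3$-vertices are the (opposite) pair $v_1,v_3$: here $R^2=K_4$ just as $C_4^2=K_4$; the $2$-vertices $v_2,v_4$ have both neighbors in $R$, so their only $G^2$-neighbors outside $R$ are the at most two external neighbors of $v_1,v_3$, giving $|L^*(v_2)|,|L^*(v_4)|\ge 4$, while $v_1,v_3$ have one neighbor outside $R$ in $G$ and at most $3$ in $G^2$, giving $|L^*(v_1)|,|L^*(v_3)|\ge 3$; coloring in the order $v_1,v_3,v_2,v_4$ leaves at least $3,2,2,1$ choices respectively.

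For $F_{2,3}$ (Figure~\ref{fig:6-reducible}) I would label its vertices as in the figure and argue the same way: its two $3$-vertices each have one neighbor outside $R$ and so keep $\ge 3$ colors in $L^*$, its thread $2$-vertices keep $\ge 3$ colors (and $\ge 4$ when both their neighbors are $3$-vertices of $R$), and one orders $V(R)$ so that the vertices with smallest $|L^*|$ relative to their degree in $R^2$ — the two $3$-vertices and the interior thread vertices adjacent to them — are colored before too many of their cover-neighbors are fixed. I expect this third configuration to be the only place needing real care: one must commit to a single order and check the step inequality at each of its vertices, and must also note that any coincidence among the external neighbors of distinct vertices of $R$ can only decrease the relevant $G^2$-degrees, so the worst-case counts above already suffice and the greedy coloring goes through. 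The $2$-thread and the $4$-face are immediate from the counts just given.
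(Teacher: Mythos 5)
Your treatment of configurations (1) and (2) is correct and is essentially the paper's argument: the same punctured lists $L^*$, the same worst-case counts ($\ge 2,\ge 2$ for the $2$-thread; $\ge 3,\ge 3,\ge 4,\ge 4$ for the $4$-face), and a valid greedy order (your explicit order $v_1,v_3,v_2,v_4$ with $3,2,2,1$ remaining choices is in fact cleaner than the abbreviated order stated in the paper). Your side remark that each vertex of $R$ has at most one neighbor outside $R$ is also the right observation for why the independent set $I'$ coming from the minimality of $G$ stays independent in the cover of $G^2$ (no two outside vertices acquire a $G^2$-edge through $R$), even though the issue is lost relations rather than ``new'' ones.

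The gap is item (3). You never actually prove it: you only promise an order and a check, and the structural guesses you base this on are wrong. The configuration $F_{2,3}$ of Figure~\ref{fig:6-reducible} is $K_{2,3}$: two $3$-vertices $v_1,v_2$ whose \emph{three} common neighbors are the three $2$-vertices $u_1,u_2,u_3$. In particular the $3$-vertices do \emph{not} have a neighbor outside $R$ (that would force degree $4$), and neither do the $2$-vertices; $F_{2,3}$ has no edge to $G-R$ at all, so by connectivity of a $6$-minimal graph it would be all of $G$. Hence $(F_{2,3})^2\cong K_5$, every list keeps all $6$ colors, and an independent transversal exists trivially (five vertices, six colors each, greedy) --- this is exactly the paper's one-line argument. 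So the case you flagged as ``the only place needing real care'' is in fact the easiest one, but as written your proposal neither identifies the configuration correctly nor completes the verification for it, so the lemma is only two-thirds proved.
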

\begin{proof}
	\begin{enumerate}
		\item For $2$-thread $v_1v_2$, if $v_1,v_2$ have a common neighbor, then $|L^*(v_1)|\ge4,|L^*(v_2)|\ge4$; otherwise, $|L^*(v_1)|\ge2,|L^*(v_2)|\ge2$. We can extend the DP-$6$-coloring of $G^2-R^2$ to $G^2$ by greedily color $v_1,v_2$ in order.
		\item For $4$-face $v_1v_2v_3v_4v_1$ with only two $3$-vertices $v_1,v_3$. Then in $(G-\{v_1,v_2,v_3,v_4\})^2$, $|L^*(v_2)|\ge4,|L^*(v_4 )|\ge4,|L^*(v_1)|\ge3,|L^*(v_3)|\ge3$. We can greedily color $v_1,v_2$ in order to obtain a DP-$6$-coloring.
		\item Let $F_{2,3}$ dispected in Figure~\ref{fig:6-reducible} be the configuration consists of three $2$-vertices $u_1,u_2,u_3$ and their two common neighbors $v_1,v_2$ with $d_G(v_1)=d_G(v_2)=3$. Then its square graph is isomorphic to complete graph $K_5$, which is obiviously DP-$6$-colorable. Thus $F_{2,3}$ is a $6$-reducible configuration.
	\end{enumerate}
\end{proof}
\begin{figure}[h]
	\centering
	\includegraphics[width=4cm]{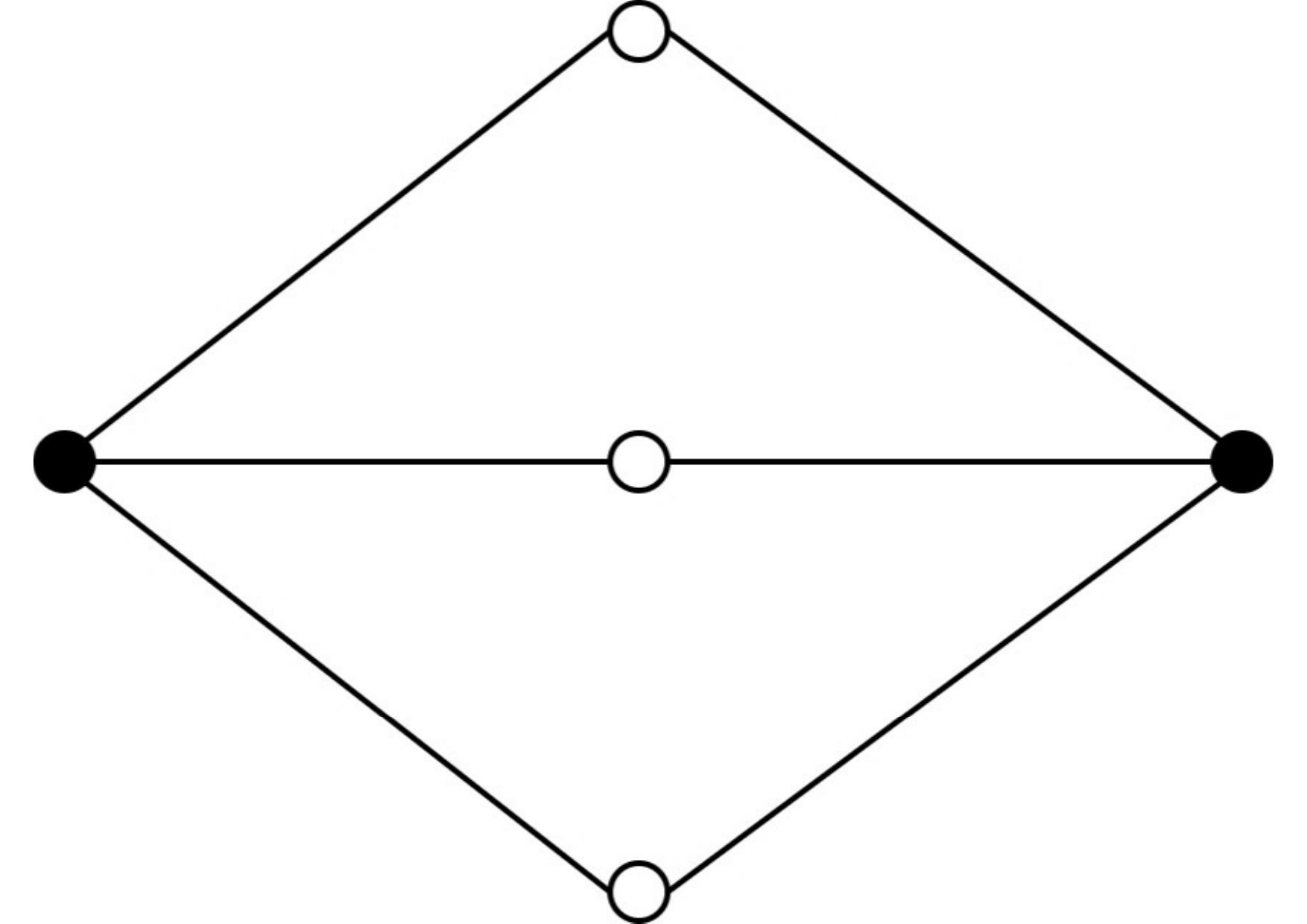}
	\caption{The graph $F_{2,3}$}
	\label{fig:6-reducible}
\end{figure}
Furtherly, we descript the $k$-minimal graph for $k=5,6$.
\begin{lemma}\label{5-minimal}
	For $5$-minimal graph $G$ and $3$-vertex $v\in V$, we have:
	\begin{enumerate}
		\item if $v$ is not adjacent to any $3$-vertex, then $G_2(v)$ is isomorphic to the subgraph of $Y_{2,2,2}$;
		\item if $v$ is adjacent to a $3$-vertex $u$, then $G_2(v),G_2(u)$ are both isomorphic to the subgraph of $Y_{0,2,2}$.
	\end{enumerate}
\end{lemma}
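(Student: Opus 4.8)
The plan is to read off the structure of $G_2(v)$ from just two reducible configurations already established: the $3$-thread (Lemma~\ref{5-reducible}(1)) and the $m$-cycle on which only one vertex is a $3$-vertex (Lemma~\ref{face}). Recall first that $G$ has no $1$-vertex, so every neighbour of the $3$-vertex $v$ is a $2$-vertex or a $3$-vertex; following each of the three edges at $v$ into the maximal run of consecutive $2$-vertices behind it produces the three longest threads incident with $v$ (an edge to a $3$-vertex gives a $0$-thread).

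I would first show these runs cannot close up. If two edges at $v$ are joined by a path all of whose internal vertices have degree $2$, we obtain a cycle $C$ through $v$ on which $v$ is the only $3$-vertex. If $\abs{E(C)}\ge 4$ the $2$-vertices of $C$ contain three consecutive ones, i.e.\ a $3$-thread, which is $5$-reducible by Lemma~\ref{5-reducible}(1); if $\abs{E(C)}=3$ then $C$ is a $3$-cycle with a unique $3$-vertex, forbidden by Lemma~\ref{face}; and $\abs{E(C)}\le 2$ is impossible since $G$ is simple. Hence the threads incident with $v$ are internally disjoint paths, each reaching a $3$-vertex on the far side, and $G_2(v)$ is $v$ together with these threads, a tree.

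Next, the length bound: every longest thread at $v$ has length at most $2$, since three consecutive $2$-vertices on such a thread would again form a $5$-reducible $3$-thread. Thus each of the three threads at $v$ has length in $\{0,1,2\}$, a length-$0$ thread occurring exactly when the corresponding neighbour is a $3$-vertex. Now assemble the cases. If $v$ has no $3$-vertex neighbour (case (1)), all three threads have length $1$ or $2$, so $G_2(v)$ is a spider whose three legs have lengths in $\{1,2\}$, hence a subgraph of $Y_{2,2,2}$. If $v$ is adjacent to a $3$-vertex $u$ (case (2)), then one thread at $v$ is the $0$-thread at $u$ and the other two have length $\le 2$, so $G_2(v)$ embeds into $Y_{0,2,2}$ (the degenerate possibilities $Y_{0,0,2}$ and $Y_{0,0,0}$ are themselves subgraphs of $Y_{0,2,2}$); applying the identical argument to $u$, which is likewise a $3$-vertex adjacent to the $3$-vertex $v$, gives $G_2(u)\cong$ a subgraph of $Y_{0,2,2}$ as well.

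The argument is short, and the one place that requires genuine care is the first step: one must rule out that a longest thread incident with $v$ curls back to $v$, which would make $G_2(v)$ contain a triangle and hence not embed into the tree $Y_{2,2,2}$; this is exactly where Lemma~\ref{face} (for the $3$-cycle case) and simplicity of $G$ (for shorter closures) are used, while all longer configurations are killed directly by $3$-thread reducibility. I do not expect any further obstacle.
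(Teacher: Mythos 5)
Your step ruling out cycles is too weak to give the conclusion in case (2), and this is exactly where the paper has to do real work. You only exclude cycles through $v$ on which $v$ is the \emph{unique} $3$-vertex (via Lemma~\ref{face} for a triangle and the $3$-thread for longer cycles). But when $v$ is adjacent to a $3$-vertex $u$, the vertex $u$ itself is the $0$-thread incident with $v$ and hence lies in $G_2(v)$, so $G_2(v)$ fails to embed in the tree $Y_{0,2,2}$ as soon as some other thread at $v$ terminates at $u$: the edge $uv$ together with a $1$- or $2$-thread joining $u$ and $v$ creates a $3$- or $4$-cycle inside $G_2(v)$ with \emph{two} $3$-vertices (and two such threads create two short cycles sharing the edge $uv$). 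None of these is excluded by Lemma~\ref{face} (which needs a unique $3$-vertex on the cycle) or by $3$-thread reducibility (such a cycle contains at most a $2$-thread), yet your argument jumps from ``no thread curls back to $v$'' to ``$G_2(v)$ is a tree,'' which does not follow in this situation.

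The paper's proof is structured precisely around this point: its Case 1 is your argument, but Cases 2 and 3 treat the coincidences in which one or both threads at $u$ end at $v$, and they are eliminated only by invoking the additional reducible configurations of Lemma~\ref{5-reducible}(2)--(6) (a $3$-face or $4$-face whose only $3$-vertices are the adjacent pair $u,v$, each carrying a pendant $1$-thread, and two $3$-/$4$-faces sharing the edge $uv$). Your proposal never uses Lemma~\ref{5-reducible}(2)--(6) at all, which is a sign that these cases were dropped rather than handled by a shortcut; to repair the proof you must show these short $u$--$v$ cycle configurations cannot occur in a $5$-minimal graph, which is the content of those reducibility statements.
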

\begin{proof}
	\begin{enumerate}
		\item If $3$-vertex $v\in V$ only have neighbors of degree $2$, and $3$-vertices $v_i(i=1,2,3)$ are connected to $v$ by $l_i$-thread($i=1,2,3$) respectively. By Lemma~\ref{face}, the $m$-face with only one $3$-vertex is reducible, so $v_1,v_2,v_3$ can not coincide with $v$. Without loss of generality, we may assume that $l_1\le l_2\le l_3$. Then $l_3\le2$,by Lemma~\ref{5-reducible}(1). Thus, $G_2(v)$ is isomorphic to the subgraph of $Y_{2,2,2}$.
		\item If $3$-vertex $v\in V$ only have one neighbor of degree $3$,say $u$. And assume $3$-vertices $v_i(i=1,2)$ are connected to $v$ by $l_i$-thread($i=1,2$); $3$-vertices $u_i(i=1,2)$ are connected to $u$ by $l'_i$-thread($i=1,2)$. Without loss of generality, we may assume $l_1\le l_2\le l'_2,l'_1\le l'_2$.\\
		\textbf{Case 1}: none of $u_1,u_2$ coincide with $v$, and none of $v_1,v_2$ coincide with $u$. By Lemma~\ref{5-reducible}(1), $l'_2\le2$. Thus $G_2(v),G_2(u)$ are both isomorphic to the subgraph of $Y_{0,2,2}$;\\
		\textbf{Case 2}: vertex $u_1$ is coincide with $v$, while $u_2$ is not coincide with $v$. Then $u,v$(or $u_1$),$u_2,v_1,v_2$ and $l_i$-thread,$l'_i$-thread($i=1,2$) may form the induced subgraph isomorphic to the configurations in Figure~\ref{fig:5-reducible}(a). Which are both $5$-reducible by Lemma~\ref{5-reducible}(2)(3).\\
		\textbf{Case 3}: both $u_1$ and $u_2$ are coincide with $v$. Then $u$(or $v_1,v_2$),$v$(or $u_1,u_2$) and $l_i$-thread,$l'_i$-thread($i=1,2$) may form the induced subgraph isomorphic to the configurations in Figure~\ref{fig:5-reducible}(b). Which are both $5$-reducible by Lemma~\ref{5-reducible}(4)(5)(6).\\
		In conclusion, $G_2(v),G_2(u)$ are both isomorphic to the subgraph of $Y_{0,2,2}$.
	\end{enumerate}
\end{proof}
\begin{figure}[h]
	\centering
	\includegraphics[width=0.8\textwidth]{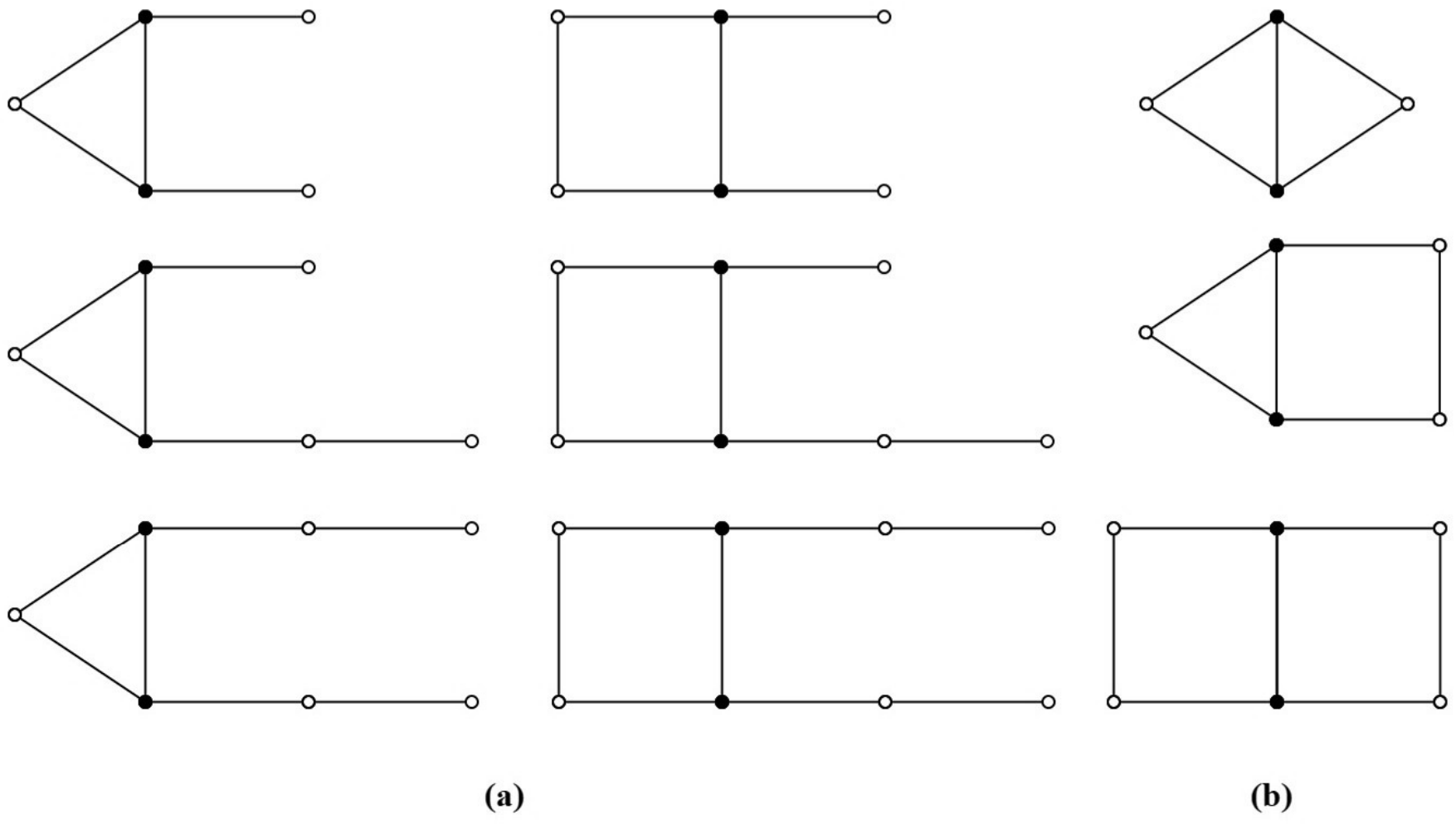}
	\caption{The $5$-reducible configurations}
	\label{fig:5-reducible}
\end{figure}

\begin{lemma}\label{6-minimal}
	For $6$-minimal graph $G$ and $3$-vertex $v$, $G_2(v)$ is isomorphic to $Y_{1,1,1}$ or subgraph of $Y_{0,1,1}$.
\end{lemma}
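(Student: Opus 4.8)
The plan is to run the same argument scheme as in Lemma~\ref{5-minimal}, but now the only $6$-reducible configurations available are those in Lemma~\ref{6-reducible}; this smaller stock is precisely why the conclusion here only bounds each thread length by $1$ rather than by $2$. First I would invoke Lemma~\ref{6-reducible}(1): a $6$-minimal graph has no $2$-thread, so every maximal thread incident with the $3$-vertex $v$ has length $0$ or $1$. Hence each of the three legs at $v$ is either a $0$-thread $vu$ with $d_G(u)=3$, or a $1$-thread $vwx$ with $d_G(w)=2$ and $d_G(x)=3$; consequently $G_2(v)$ is built from $v$ by attaching three legs of length at most $2$, and therefore embeds in $Y_{1,1,1}$. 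What remains is to decide when the full $Y_{1,1,1}$ is attained and, on the other side, to exclude the ``wrap-around'' degeneracies in which two legs meet at a common far endpoint (or are joined by a chord), since these would put a cycle into $G_2(v)$, which a spider cannot contain.

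I would then split into two cases. \textbf{Case 1:} $v$ has no $3$-neighbour. Then all three legs are $1$-threads $vw_ix_i$ ($i=1,2,3$), with $d_G(w_i)=2$ and $d_G(x_i)=3$. If two endpoints coincide, say $x_1=x_2$, then $vw_1x_1w_2$ is a $4$-face whose only $3$-vertices are $v$ and $x_1$, and these are non-adjacent: this is configuration (2) of Lemma~\ref{6-reducible}, a contradiction. If all three coincide, then $v$ and $x_1$ share the three $2$-neighbours $w_1,w_2,w_3$, which is exactly $F_{2,3}$, contradicting Lemma~\ref{6-reducible}(3). Since the $w_i$ have degree $2$ and $v$ has no $3$-neighbour, nothing else can be identified, so $G_2(v)\cong Y_{1,1,1}$.

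\textbf{Case 2:} $v$ is adjacent to a $3$-vertex $u$. Then the $0$-thread $vu$ is one leg and the other two legs have length at most $2$. Even if $v$ happens to be adjacent to a second (or third) $3$-vertex, the resulting tree is of type $Y_{0,0,\le 1}$, which is a subgraph of $Y_{0,1,1}$ (simply delete one edge of $Y_{0,1,1}$). Thus, away from the degeneracies below, $G_2(v)$ is isomorphic to a subgraph of $Y_{0,1,1}$, and combining the two cases gives the lemma.

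The hard part will be the degeneracies that force a short cycle through two $3$-vertices into $G_2(v)$: a $1$-thread $vwx$ whose endpoint $x$ coincides with a $0$-thread neighbour $u$ of $v$ (yielding a $3$-face through exactly two $3$-vertices), and — if $G_2(v)$ is read as the induced subgraph — a chord such as $x_ix_j$ in Case 1 or $u_1u_2$ in Case 2. Unlike the $5$-colour setting, where Lemma~\ref{5-reducible}(4)--(6) outright forbid adjacent faces through two $3$-vertices, here one must carry out a fresh short reducibility argument for each such configuration: delete the small subgraph (the $3$-face, or the two $3$-neighbours together with the intervening $2$-vertices), apply minimality to the smaller graph, and re-colour the deleted vertices in an order for which each of them sees at most five already-coloured $G^2$-neighbours. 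The squares of these configurations are tiny (at most $K_5-e$ or $K_3\vee\overline{K_3}$ on the deleted part), so each is DP-$6$-colorable and every $L^*$-count is routine; the one genuine obstacle is arranging the colouring order so that the bound ``at most five coloured $G^2$-neighbours'' holds simultaneously for every reinstated $2$-vertex (each of $G^2$-degree at most $6$) and every $3$-vertex of the deleted part.
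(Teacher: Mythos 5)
Your main line is the paper's: Lemma~\ref{6-reducible}(1) bounds every thread incident with $v$ by length $1$, identifications of two far endpoints are excluded by Lemma~\ref{6-reducible}(2) (the $4$-cycle with two nonadjacent $3$-vertices), and identification of all three by Lemma~\ref{6-reducible}(3) ($F_{2,3}$); your Case 1 is exactly the paper's $l_1=1$ case. The genuine gap is that the degeneracies you yourself single out as ``the hard part'' of Case 2 are never actually handled: you only promise ``a fresh short reducibility argument for each such configuration'' and an unspecified colouring order, so the case analysis you set up is left incomplete. Moreover, the promised scheme would not go through for the chord configurations you list: for two adjacent $3$-neighbours $u_1,u_2$ of $v$ (a triangle of three $3$-vertices), after deleting the triangle each deleted vertex can have up to five already-coloured $G^2$-neighbours outside, leaving lists of size $1$ on three pairwise $G^2$-adjacent vertices, and no colouring order repairs that; nothing in the paper's stock of reducible configurations covers it either.

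The way out is that the chords do not have to be excluded at all. $G_2(v)$ consists of $v$ and the maximal threads incident with it; an edge joining the far $3$-vertex ends of two distinct legs lies on no such thread, so it does not affect the isomorphism type asserted by the lemma (this is how the paper uses $G_2(v)$: only coincidences of endpoints matter, and those are precisely the cases settled via Lemma~\ref{face} and Lemma~\ref{6-reducible}(2)(3)). The one Case 2 degeneracy that does need a word, a $1$-thread $vwx$ whose end $x$ equals the $3$-neighbour $u$ (a triangle $vuw$ with $w$ a $2$-vertex), is not a delicate ordering problem: $w$ has at most four neighbours in $G^2$ (namely $u$, $v$ and one further neighbour of each), so deleting $w$, applying minimality and recolouring $w$ greedily from its at least two remaining colours already extends the colouring. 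Also note that the coincidence of the two $1$-thread ends can occur in Case 2 as well (giving the same $4$-cycle as in Case 1), which your Case 2 text silently skips although the paper treats it. With the triangle observation inserted, the Case 2 coincidence added, and the chord discussion dropped, your argument becomes a complete proof along the paper's lines.
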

\begin{proof}
	Let $v_i(i=1,2,3)$ be the $3$-vertices connected to $v$ by $l_i$-thread($i=1,2,3$) respectively, and assume $l_1\le l_2\le l_3$. By Lemma~\ref{6-reducible}(1), $l_3\le 1$. For $l_1=0$, if $v_2,v_3$ are coincide, then $G_2(v)$ will form a $4$-face with two nonadjacent $3$-vertices. Which is $6$-reducible by Lemma~\ref{6-reducible}(2), so $G_2(v)$ is isomorphic to subgraph of $Y_{0,1,1}$. If $l_1=1$, $v_1,v_2,v_3$ can not coincide by Lemma~\ref{face} and Lemma~\ref{6-reducible}(2)(3), thus $G_2(v)$ is isomorphic to $Y_{1,1,1}$.
\end{proof}

\subsection{Main Results}\label{main}
Now we complete the proof by discharging method. Let the initial charge of $k$-vertex be its degree $k$, and let $ch^*(v)$ be the final charge of $v$. 
\begin{proof}[Proof of Theorem~\ref{thm1}]
	For $5$-minimal graph $G$, we only need one discharging rule:
	\begin{description}
		\item[R1] Each $3$-vertex gives $\frac{1}{4}$ to each adjacent $2$-vertex.
	\end{description}
	We show that after the discharging proceduce, the final charge of any vertex is at least $\frac{9}{4}$, which is in contradiction to the assumption of the maximum average degree of $5$-minimal graph $G$.\par
	For any $3$-vertex $v$, it has at most three neighbors of degree $2$ by Lemma~\ref{5-minimal}. Thus, $ch^*(v)\ge3-3\cdot\frac{1}{4}=\frac{9}{4}$. 
	For any $2$-vertex $v$, it must on a $1$-thread or a $2$-thread. In the former case, two neighbors of $v$ are both $2$-vertices, then $ch^*(v)=2+2\cdot\frac{1}{4}=\frac{10}{4}=\frac{5}{2}$; in the later case, $v$ is only adjacent to one $3$-vertex, thus $ch^*(v)=2+\frac{1}{4}=\frac{9}{4}$.
\end{proof}
	In fact, the bound we give above is tight. Consider the graph $F_{2,6}$ depicted in Figure~\ref{graph-tight}, which has average degree $\frac{9}{4}$. But the square of the subgraph induced by $\{u_1,u_2,v_1,v_2,w_1,w_2\}$ in $(F_{2,6})^2$ is isomorphic to complete graph $K_6$, which is not DP-$5$-colorable.
\begin{figure}[h]
	\centering
	\includegraphics[width=5cm]{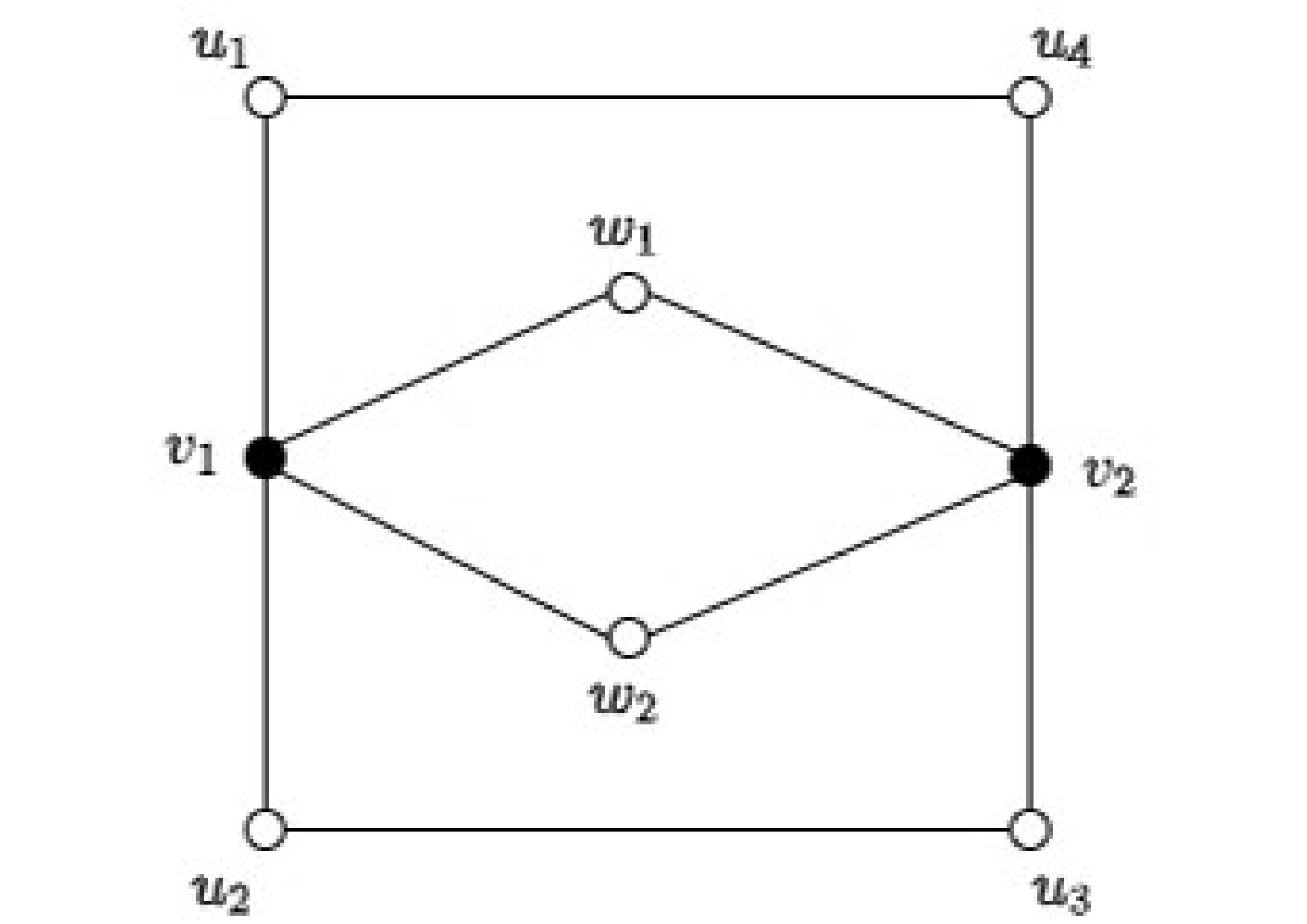}
	\caption{The graph $F_{2,6}$}
	\label{graph-tight}
\end{figure}

\begin{proof}[Proof of Theorem~\ref{thm2}]
	For $6$-minimal graph $G$, the only discharging rule is as follows:
	\begin{description}
		\item[R2] Each $3$-vertex gives $\frac{1}{5}$ to each adjacent $2$-vertex.
	\end{description}
	We show that if we redistribute charges by the rule above, the final charge of any vertex is at least $\frac{12}{5}$, which also leads to a contradiction.\par
	For any $3$-vertex $v$, it has at most three neighbors of degree $2$ by Lemma~\ref{6-minimal}. Then we obtain $ch^*(v)\ge 3-3\cdot\frac{1}{5}=\frac{12}{5}$. 
	For any $2$-vertex $v$, it only adjacent to $3$-vertices by Lemma~\ref{6-reducible}(1). Thus $ch^*(v)=2+2\cdot\frac{1}{5}=\frac{12}{5}$.
\end{proof}
Unfortunately, we don't know whether this bound is tight or not.

\end{document}